\DeclareMathOperator*{\nnz}{nnz}
\DeclareMathOperator*{\supp}{supp}
\DeclareMathOperator*{\mat}{mat}
\DeclareMathOperator*{\vecz}{vec}
\DeclareMathOperator*{\offdiag}{offdiag}
\DeclareMathOperator*{\argmin}{arg\,min}
\DeclareMathOperator*{\essinf}{ess\,inf}
\newtheorem*{thm}{Theorem}
\newtheorem*{cor}{Corollary}
\begin{document}

\title{Numerical study of a parametric parabolic equation and a~related inverse boundary value problem}
\author{Lauri Mustonen\footnote{Aalto University, Department of Mathematics and Systems Analysis, P.O.\ Box 11100, FI-00076 Aalto, Finland (lauri.mustonen@aalto.fi). This work was supported by the Academy of Finland (decision 267789) and the Finnish Foundation for Technology Promotion TES.}}

\maketitle

\begin{abstract}
We consider a time-dependent linear diffusion equation together with a related inverse boundary value problem.
The aim of the inverse problem is to determine, based on observations on the boundary, the non-homogeneous diffusion coefficient in the interior of an object.
The method in this paper relies on solving the forward problem for a whole family of diffusivities by using a spectral Galerkin method in the high-dimensional parameter domain.
The evaluation of the parametric solution and its derivatives is then completely independent of spatial and temporal discretizations.
In case of a quadratic approximation for the parameter dependence and a direct solver for linear least squares problems, we show that the evaluation of the parametric solution does not increase the complexity of any linearized subproblem arising from a Gauss--Newtonian method that is used to minimize a Tikhonov functional.
The feasibility of the proposed algorithm is demonstrated by diffusivity reconstructions in two and three spatial dimensions.
\end{abstract}

\section{Introduction}

Inverse boundary value problems arise in situations where one tries to find information about interior properties of an object by boundary measurements.
In electrical impedence tomography (EIT), for example, an electric current is injected into the body and the corresponding voltages are measured across the boundary \cite{borcea02}.
The aim is to reconstruct the electrical conductivity as a function, or to locate conductivity anomalies having prescribed (e.g., constant or vanishing) conductivity values.
The inverse problem is nonlinear and ill-posed, whereas the forward problem, namely, determining the boundary voltages when the conductivity and current patterns are given, is governed by a well-posed elliptic partial differential equation.
Mathematically equivalent applications include electrical capacitance tomography \cite{soleimani05}.

In this paper, we consider the inverse boundary value problem of a time-dependent diffusion equation.
This could be a model for thermal tomography, where the thermal diffusivity is to be reconstructed \cite{bakirov05}.
We assume that the heat flux at the boundary, as well as the initial temperature, are controlled and that the boundary temperatures are measured.
The diffusivity is assumed to be time-independent.
Unlike in the EIT problem, where the steady-state voltages are measured for several current patterns, in this paper we treat a single (or a few) initial and boundary conditions and measure the boundary temperatures at several instances of time.
Similar inverse boundary value problems for stationary inclusion-type diffusivities have been examined in, e.g., \cite{chapko98,chapko99,harbrecht13,ikehata10}.
In \cite{kolehmainen08}, both heat capacity and thermal conductivity were reconstructed simultaneously by using a least squares approach with several boundary conditions.
In \cite{toivanen12}, an extension to unknown surface flux was proposed and the feasibility of the method was verified with experimental three-dimensional data in \cite{toivanen14}.
The case of time-varying inclusions has been studied in \cite{gaitan15}.
For theoretical treatment of general inverse parabolic problems, we refer to~\cite{isakov06}.

Our approach to the inverse problem is based on writing the forward problem as a parametric differential equation.
To that end, the temperature is viewed as a function depending not only on spatial and temporal variables, but also on parameters that define the diffusivity function.
For the high-dimensional parameter domain, we adopt the spectral Galerkin method when the numerical solution is sought.
The spatial domain is discretized with the finite element method.
The combination was dubbed \emph{stochastic Galerkin finite element method} in, e.g., \cite{babuska04}, where the parameters were interpreted as random variables.
In that context, the spectral discretization is often called (generalized) \emph{polynomial chaos}.
Since \cite{ghanem91}, both Galerkin and collocation methods have been thoroughly studied and analyzed for several kinds of uncertainty quantification problems, including the time-dependent random diffusion equation \cite{back11,lemaitre10,nobile09,schwab11,xiu10,xiu03,xiu09}.

The time integration is performed by using an additive semi-implicit Euler method, a special case of so called implicit-explicit (IMEX) methods \cite{ascher95}.
In particular, in \cite{zhong96} these methods were proposed for stiff systems resulting from hypersonic transient reactive flows.
Although only first-order in time, in \cite{xiu09} the method was shown to be unconditionally stable for various discretizations of parametric and stochastic diffusion equations.
We demonstrate that when using locally supported functions to represent the diffusivity, the semi-implicit method is memory-optimal in the sense that storing and solving the resulting linear system requires strictly less space than the solution itself.

After solving the parametric forward problem, obtaining the boundary values for different diffusivities is merely a task of polynomial evaluation.
Indeed, in this paper we show that for a second-order approximation of the parameter dependence, the evaluation costs of the parametric solution and its Jacobian matrix do not increase the overall complexity of the inverse problem, if a (regularized) least squares minimization scheme based on Gauss--Newton method and QR decomposition is used to find the solution.
These minimization schemes include the standard Gauss--Newton method as well as its trust region counterparts, which from computational point of view are very similar.
Higher-order approximations increase the computational burden, but the workload of the inverse problem is still completely independent of the spatial and temporal discretizations of the forward problem.
However, for large-scale nonlinear inverse problems it is arguably recommendable to resort to conjugate gradient iterations instead of QR decompositions when solving the linearized subproblems (see, e.g., \cite{hanke97}).
The analysis of combining polynomial approximations and iterative linear least squares solvers is left for future studies.

An algorithmically similar inversion approach, albeit with Bayesian paradigm, was recently carried out for time-independent EIT problem in \cite{hakula14}.
To our knowledge, this is the first time when a parametric spectral solution to the forward problem is utilized for solving an inverse parabolic boundary value problem.
The numerical results show that the method is capable of reconstructing the diffusion coefficient based on boundary measurements.
Naturally, the quality of the reconstruction depends on the noise level of the measurement.
In addition, the method is more suitable for smooth diffusivities.
The distinctive feature of the approach is that once the parametric solution is saved, it takes only a few seconds to create the reconstruction from a given set of measurements.
Indeed, the obvious advantage of the method is that spatial and temporal discretizations of the forward problem do not affect the complexity of reconstructing the diffusivity.

This paper is organized as follows.
In section~\ref{sec:forward}, the precise mathematical formulation of the model problem is given.
Then we re-formulate the problem in a parametric sense and discretize the equation in spatial, parametric and temporal dimensions.
Section~\ref{sec:inverse} considers the inverse problem from the computational point of view.
Numerical examples in two and three spatial dimensions are provided in section~\ref{sec:num} and some conclusions are drawn in section~\ref{sec:conclusions}.

\section{Parametric forward model}
\label{sec:forward}

\subsection{Problem setting}
\label{ssec:setting}

Let $\varOmega \subset \mathbf{R}^d$, $d \geq 2$, be a bounded domain with Lipschitz boundary $\partial \varOmega$ and exterior unit normal $\hat{\boldsymbol{n}} \in [L^\infty(\partial \varOmega)]^d$. Furthermore, let $T>0$ be given.
The parabolic initial/boundary value problem considered in this paper is to find $u \colon \varOmega \times [0,T] \to \mathbf{R}$ satisfying
  \begin{equation}
  \label{eq:model}
    \begin{dcases}
    \partial_t u - \nabla \cdot (a \nabla u) = f & \text{in } \varOmega \times (0,T), \\
    a \nabla u \cdot \hat{\boldsymbol{n}} = g & \text{on } \partial \varOmega \times (0,T), \\
    u = u_0 & \text{in } \varOmega \times \{0\},
    \end{dcases}
  \end{equation}
where $f \in L^2((0,T);L^2(\varOmega))$ and $g \in L^2((0,T);H^{-1/2}(\partial \varOmega))$ denote the interior source and the boundary flux, respectively, and $u_0 \in L^2(\varOmega)$ is the initial condition.
The diffusivity (or diffusion coefficient) $a$ belongs to $L_+^\infty(\varOmega)$, where
  \begin{equation}
  \label{eq:Linftyplus}
  L_+^\infty(\varOmega) \coloneqq \{ a \in L^\infty(\varOmega) \mid \essinf a > 0 \}.
  \end{equation}
In particular, the diffusivity is assumed to be independent of the time variable $t$.
The variational formulation of the problem \eqref{eq:model} is to find $u$ such that
  \begin{equation}
  \label{eq:variational}
  (\partial_t u, v)_{L^2(\varOmega)} + (a \nabla u, \nabla v)_{[L^2(\varOmega)]^d} = (f,v)_{L^2(\varOmega)} + \langle g,\gamma v \rangle,
  \end{equation}
accompanied with an appropriate initial condition, holds for all $v \in H^1(\varOmega)$ and for almost every $t \in (0,T)$. Here, $\gamma \colon H^1(\varOmega) \to H^{1/2}(\partial \varOmega)$ denotes the trace operator and $\langle \cdot, \cdot \rangle$ is the duality pairing between $H^{-1/2}(\partial \varOmega)$ and $H^{1/2}(\partial \varOmega)$.
We refer to solving $u$ from \eqref{eq:model} or \eqref{eq:variational} as the \emph{forward problem}. 
For the variational form \eqref{eq:variational} there exists a unique solution $u$ satisfying
  \begin{equation}
  \label{eq:solution}
  u \in L^2((0,T); H^1(\varOmega)) \cap C^0([0,T]; L^2(\varOmega))
  \end{equation}
(see, e.g., \cite[Chap.~XVIII]{dautray92}).

By \emph{inverse problem} we mean finding a diffusivity $a \in L_+^\infty(\varOmega)$ such that the solution $u$ of the corresponding forward problem matches with given data.
More precisely, we consider some given boundary data $\tilde{U} \colon \partial \varOmega \times (0,T) \to \mathbf{R}$ and the corresponding trace $U(a) \coloneqq \gamma u \in L^2((0,T); H^{1/2}(\partial \varOmega))$ of the forward solution.
It is not obvious whether the equation $U(a) = \tilde{U}$ admits a (unique) solution~$a$; see \cite[Chap.~9]{isakov06} for some related results with slightly different assumptions.
In any case, the inverse problem is ill-posed in the sense that the solution (if it exists) does not depend continuously on the data $\tilde{U}$ in any reasonable metric.

A physical interpretation of the problem \eqref{eq:model} could be that $u$ represents the temperature and $a$ is the thermal diffusivity in an object $\varOmega$ which has unit heat capacity.
The setting is easily extended to a non-homogeneous but \emph{known} heat capacity, in which case $a$ denotes the thermal conductivity \cite{bakirov05}.
Although we restrict ourselves to the model problem \eqref{eq:model}, the presented methods are, with minor changes, widely applicable to other types of problems.
For example, different boundary conditions, including Dirichlet, Robin and mixed, can be handled easily.
Many of the observations apply to elliptic parametric problems as well.

\subsection{Parametrization of the diffusivity}
In order to treat the inverse problem $U(a) = \tilde{U}$ numerically, we assume that the diffusivity is characterized by a finite number of parameters and an injective mapping $a \colon \varTheta \to L_+^\infty(\varOmega)$, where $\varTheta \subseteq \mathbf{R}^P$ is a high-dimensional parameter domain, is given.
Besides this, our aim in this section is to establish an explicit parameter dependence for the solution $u$ as well.
That is, we seek a numerical solution to the forward problem in the form $u \colon \varOmega \times [0,T] \times \varTheta \to \mathbf{R}$.
The existence and uniqueness (similar to \eqref{eq:solution}) of such solution is obvious.

The most general methods for obtaining the parametric forward solution are based on solving the regular problem \eqref{eq:model} or \eqref{eq:variational} for a set of \emph{collocation points} in the parameter domain.
The full parametric solution would then be written by either using the collocation points as quadrature nodes and projecting the solution to a chosen basis by numerical integration, or by interpolating the solution to the whole domain $\varTheta$ by using a high-dimensional interpolation rule.
In both cases, the number of collocation points needed is typically very high and the construction of the points is often based on sparse Smolyak grids \cite{babuska10,nobile08}.
A related strategy is to use Galerkin method with double orthogonal polynomials, which also results in a decoupled system of equations \cite{frauenfelder05}.

In this paper, we discretize the parameter domain by the spectral Galerkin approach that yields a large coupled system.
We introduce a positive weight function $w \in L^1(\varTheta)$ and employ the corresponding weighted $L^2$ spaces.
The variational form of the forward problem then becomes to find $u \colon \varOmega \times [0,T] \times \varTheta \to \mathbf{R} $ such that for all $v \in L_w^2(\varTheta; H^1(\varOmega))$ and a.e.\ in $(0,T)$
  \begin{equation}
  \label{eq:paramVariational}
  (\partial_t u, v)_{L_w^2(\varTheta ; L^2(\varOmega))} + (a \nabla u, \nabla v)_{L_w^2(\varTheta; [L^2(\varOmega)]^d)}
  = (f,v)_{L_w^2(\varTheta ; L^2(\varOmega))} + ( \langle g,\gamma v \rangle ),
  \end{equation}
where $( \langle \cdot, \cdot \rangle )$ denotes the duality pairing between the weighted spaces $L_w^2(\varTheta; H^{-1/2}(\partial \varOmega))$ and $L_w^2(\varTheta; H^{1/2}(\partial \varOmega))$.
Here, the functions $f$ and $g$ are assumed to be constant with respect to the parameters.
We \emph{assume} that there exists a unique solution
  \begin{equation}
  u \in L_w^2 \big(\varTheta; L^2 ( (0,T); H^1(\varOmega) ) \cap C^0 ([0,T]; L^2(\varOmega) ) \big).
  \end{equation}
This is guaranteed, for example, if $\varTheta$ is bounded and there exist positive constants $a_\text{min}$ and $a_\text{max}$ such that
  \begin{equation}
  a_\text{min} \leq a(\boldsymbol{x}, \boldsymbol{\vartheta}) \leq a_\text{max}
  \end{equation}
for almost every $(\boldsymbol{x}, \boldsymbol{\vartheta}) \in \varOmega \times \varTheta$.

A convenient way to define the parametrized diffusivity $a \colon \varTheta \to L_+^\infty(\varOmega)$
is to write
  \begin{equation}
  \label{eq:a}
  a(\boldsymbol{\vartheta}) = \sum_{p=1}^P \vartheta_p \psi_p
  \end{equation}
for $\boldsymbol{\vartheta} \in \varTheta$, or slightly more generally
  \begin{equation}
  \label{eq:aMean}
  a(\boldsymbol{\vartheta}) = \bar{a} + \sum_{p=1}^P \vartheta_p \psi_p
  \end{equation}
for some ``background diffusivity'' $\bar{a} \in L_+^\infty(\varOmega)$.
In this paper, we resort to \eqref{eq:a} and choose $\{\psi_p\}_{p=1}^P \subset L^\infty(\varOmega)$ to be nonnegative and such that they form a partition of unity.
Hence, the resulting diffusivity $a$ is bounded by the extremal values of the parameters, i.e.,
  \begin{equation}
  \inf_{\boldsymbol{\vartheta} \in \varTheta} \min_{1 \leq p \leq P} \vartheta_p \leq a(\boldsymbol{\varsigma}) \leq \sup_{\boldsymbol{\vartheta} \in \varTheta} \max_{1 \leq p \leq P} \vartheta_p \qquad \text{in } \varOmega
  \end{equation}
for all $\boldsymbol{\varsigma} \in \varTheta$.
Naturally, the positivity requirement \eqref{eq:Linftyplus} imposes restrictions on choosing the parameter domain $\varTheta$ once the functions $\psi_p$ are fixed.

One possible family $\{\psi_p\}_{p=1}^P$ is a set of B-splines, which for a rectangular domain $\varOmega$ are easy to construct as tensor products of univariate splines.
Let us briefly recap the basic properties of B-splines, following \cite{hollig03}.
A standard univariate uniform B-spline of degree $s \in \mathbf{N}$ can be defined recursively as a convolution
  \begin{equation}
  b_s(x) \coloneqq \int_{-\infty}^\infty b_{s-1}(x-y) b_0(y) \,\mathrm{d}y,
  \end{equation}
where $b_0(x)$ is the indicator function of the interval $[0,1)$.
We see that $b_s \in C^{s-1}(\mathbf{R})$ and $\supp(b_s) = [0, s+1]$.
The splines $b_s(x+s), \ldots, b_s(x)$ form a partition of unity on the interval $[0,1)$.
The multivariate splines can be constructed as
  \begin{equation}
  b_{s,d}(\boldsymbol{x}) \coloneqq \prod_{i=1}^d b_s(x_i)
  \end{equation}
and again $b_{s,d} \in C^{s-1}(\mathbf{R}^d)$.
Transforming the standard splines to form a desired partition of unity is elementary.
See \cite{hollig03} for constructing splines for non-rectangular domains.
For polygonal and polyhedral domains one can also use piecewise linear functions $\psi_p$ such as those used in finite element solvers.

It can be shown that if $u$ and $\tilde{u}$ are two forward solutions corresponding to arbitrary diffusivities $a$ and $\tilde{a}$, respectively (in $L_+^\infty(\varOmega)$, but not necessarily of the form \eqref{eq:a}), then there exists $C_1>0$ such that
  \begin{equation}
  \lVert u - \tilde{u} \rVert_{L^2((0,T); L^2(\varOmega))} \leq C_1 \lVert a - \tilde{a} \rVert_{L^\infty(\varOmega)},
  \end{equation}
see, e.g., \cite{hoang13}.
On the other hand, the approximation error result for tensor product B-splines $\{\psi_p\}_{p=1}^P$ states that if $S$ contains all the functions of the form \eqref{eq:a} with $\varTheta = \mathbf{R}^P$, then
  \begin{equation}
  \inf_{a \in S} \lVert a - \tilde{a} \rVert_{L^\infty(\varOmega)} \leq C_2 \kappa^{s+1} \max_{1 \leq i \leq d} \lVert \partial_i^{s+1} \tilde{a} \rVert_{L^\infty(\varOmega)}
  \end{equation}
for any diffusivity $\tilde{a} \in L^\infty(\varOmega)$ and spline degree $s \geq 0$, where $C_2(s)>0$ and $\kappa \sim P^{-1/d}$ is a characteristic distance between spline knots \cite{dahmen80}.

Substituting \eqref{eq:a} into \eqref{eq:paramVariational} leads to
  \begin{equation}
  \label{eq:paramVarParam}
  (\partial_t u, v) + \sum_{p=1}^P (\iota_p \psi_p \nabla u, \nabla v) = (f,v) + ( \langle g,\gamma v \rangle ),
  \end{equation}
where we have dropped the subscripts for brevity, and where the projection $\iota_p \colon \varTheta \to \mathbf{R}$ is defined by $\iota_p(\boldsymbol{\vartheta}) = \vartheta_p$.
In the following, we discretize the spaces $L_w^2(\varTheta)$ and $H^1(\varOmega)$ in order to recast \eqref{eq:paramVarParam} as a matrix equation, which is then solved by performing a finite difference discretization of the temporal domain.

For a separable Hilbert space $H$, the isomorphism $L_w^2(\varTheta; H) \simeq L_w^2(\varTheta) \otimes H$ holds, if $L_w^2(\varTheta)$ is also separable (see, e.g., \cite[Sec.~B.3]{schwab11} and references therein).
Now if $\{\phi_i\}$ and $\{\varphi_j\}$ are countable bases for $H^1(\varOmega)$ and $L_w^2(\varTheta)$, respectively, then $\{\phi_i \varphi_j\}$ forms a basis for $L_w^2(\varTheta; H^1(\varOmega))$.
This suggests looking for a numerical solution to the parametric forward \eqref{eq:paramVarParam} problem in the form
  \begin{equation}
  u_{M,N}(\boldsymbol{x},t,\boldsymbol{\vartheta}) = \sum_{i=1}^M \sum_{j=1}^N \hat{u}_{i,j}(t) \phi_i(\boldsymbol{x}) \varphi_j(\boldsymbol{\vartheta})
  \end{equation}
(where the finite-dimensional bases are not necessarily subsets of the aforementioned countable bases) with the time-independent test functions $v \in L_w^2(\varTheta; H^1(\varOmega))$ having the form $v_{i,j} = \phi_i \varphi_j$.

The semi-discrete equation for the parametric forward problem \eqref{eq:paramVarParam} can now be written as
  \begin{equation}
  \label{eq:ode}
  \partial_t \boldsymbol{B} \hat{\boldsymbol{u}}(t) + \boldsymbol{A} \hat{\boldsymbol{u}}(t) = \hat{\boldsymbol{r}}(t),
  \end{equation}
where $\hat{\boldsymbol{u}} \colon (0,T) \to \mathbf{R}^{MN}$ denotes the vector of unknown coefficients
  \begin{equation}
  \label{eq:uhat}
  \hat{\boldsymbol{u}}(t) \coloneqq [\hat{u}_{1,1}(t), \hat{u}_{2,1}(t), \ldots, \hat{u}_{M,1}(t), \hat{u}_{1,2}(t), \ldots, \hat{u}_{M-1,N}(t), \hat{u}_{M,N}(t)]^\mathrm{T},
  \end{equation}
$\boldsymbol{B} \in \mathbf{R}^{MN \times MN}$ is the parametric mass matrix defined by its entries
  \begin{equation}
  \label{eq:Rijkl}
  B_{i,j,k,l} = (\varphi_j,\varphi_l)_{L_w^2(\varTheta)} (\phi_i,\phi_k)_{L^2(\varOmega)}
  \end{equation}
and $\boldsymbol{A} \in \mathbf{R}^{MN \times MN}$ is the parametric stiffness matrix satisfying
  \begin{equation}
  \label{eq:Aijkl}
  A_{i,j,k,l} = \sum_{p=1}^P (\iota_p \varphi_j, \varphi_l)_{L_w^2(\varTheta)} (\psi_p \nabla \phi_i, \nabla \phi_k)_{L^2(\varOmega)}.
  \end{equation}
The vector $\hat{\boldsymbol{r}} \colon (0,T) \to \mathbf{R}^{MN}$ on the right-hand side is
  \begin{equation}
  \label{eq:r}
  \hat{r}_{k,l}(t) = (1,\varphi_l)_{L_w^2(\varTheta)} \big((f(t),\phi_k)_{L^2(\varOmega)} + \langle g(t), \phi_k \rangle\big).
  \end{equation}

\subsection{Finite element discretization}
We construct a finite-dimensional subspace $V_h \subset H^1(\varOmega)$ by following the standard finite element method (FEM) procedures \cite{larsson03}.
The finite element basis functions are denoted by $\{ \phi_i \}_{i=1}^M$ and $h>0$ is the mesh size parameter of a regular enough mesh.
The contribution of the spatial parts in \eqref{eq:Rijkl} can be thought of as the symmetric positive-definite mass matrix $\boldsymbol{B}^\bullet \in \mathbf{R}^{M \times M}$ defined by
  \begin{equation}
  \label{eq:femMass}
  B^\bullet_{i,k} = (\phi_i, \phi_k)_{L^2(\varOmega)}.
  \end{equation}
For a non-homogeneous heat capacity, the elements of the mass matrix should be modified accordingly.

The spatial term in \eqref{eq:Aijkl} involves symmetric positive-semidefinite matrices of the stiffness type
  \begin{equation}
  A^{(p)}_{i,k} = (\psi_p \nabla \phi_i, \nabla \phi_k)_{L^2(\varOmega)}.
  \end{equation}
Note that the individual matrices $\boldsymbol{A}^{(p)}$ may be very sparse and rank-deficient if the functions $\psi_p$ are locally supported.
Let us denote by $\nnz(\boldsymbol{V})$ the number of nonzero elements of an arbitrary matrix $\boldsymbol{V}$ and introduce a sparsity quantity
  \begin{equation}
  \label{eq:eta}
  \eta \coloneqq \frac{\sum_{p=1}^P \nnz(\boldsymbol{A}^{(p)})}{\nnz(\boldsymbol{A}^\bullet)},
  \end{equation}
where $\boldsymbol{A}^\bullet \in \mathbf{R}^{M \times M}$ is the standard stiffness matrix defined by $A^\bullet_{i,k} = (\nabla \phi_i, \nabla \phi_k)$.
Clearly, $1 \leq \eta \leq P$.
Note also that 
  \begin{equation}
  \label{eq:pou}
  \sum_{p=1}^P \boldsymbol{A}^{(p)} = \boldsymbol{A}^\bullet.
  \end{equation}
due to the partition of unity property of $\{\psi_p\}_{p=1}^P$.

We do not consider the convergence with respect to $h$ in detail.
However, recall that the familiar a priori error estimate
  \begin{equation}
  \lVert u - u_h \rVert_{H^1(\varOmega)} \leq C h^{r-1} \lVert u \rVert_{H^{r}(\varOmega)}
  \end{equation}
for the numerical solution $u_h$ of a canonical elliptic problem depends on $r \geq 1$, for which
  \begin{equation}
  r \leq \sup_{q \in \mathbf{R}}\{q \mid u \in H^q(\varOmega)\},
  \end{equation}
and for which the employed finite element type also sets an upper bound.
Since for a noncontinuous coefficient function (corresponding to zeroth order splines and $s=0$), the solution $u$ is not in $H^2(\varOmega)$, the convergence becomes sublinear and thus deteriorates even for piecewise linear FEM basis functions.
Similarly, higher order FEM basis functions should be used only if the spline degree is high enough.
In practice, however, the cardinality of the finite element space is much larger than the number of splines.
Thus, the solution may converge rapidly regardless of the nonsmoothness of the diffusivity.
The compatibility of the diffusivity representation and finite element discretization, in terms of forward computations, is discussed in \cite{mustonen14}.

\subsection{Spectral Galerkin method}
Next, we discretize the space $L_w^2(\varTheta)$ by introducing an $N$-dimensional subspace $W \subset L_w^2(\varTheta)$.
We adopt the spectral Galerkin method, and therefore choose basis functions $\{\varphi_j\}_{j=1}^N$ that are orthogonal in the $L_w^2$-sense.
For convenience, we also assume normality, thus
  \begin{equation}
  (\varphi_j, \varphi_l)_{L_w^2(\varTheta)} = \delta_{j,l},
  \end{equation}
where $\delta$ is the Kronecker delta.
It immediately follows that the matrix corresponding to the parametric part of \eqref{eq:Rijkl} is the identity matrix $\boldsymbol{I} \in \mathbf{R}^{N \times N}$ and therefore $\boldsymbol{B} = \boldsymbol{I} \otimes \boldsymbol{B}^\bullet$.

We only consider hyperrectangular parameter domains, and for simplicity assume that the domain is a hypercube $\varTheta = E^P$ for some interval $E \subset \mathbf{R}$.
The $P$-variate basis functions can then be easily constructed as tensor products of univariate polynomials $\{ \bar{\varphi}_r \}_{r=0}^n$, where $n \in \mathbf{N}_0$ is the maximum univariate degree.
More precisely, we define
  \begin{equation}
  \label{eq:varphij}
  \varphi_j(\boldsymbol{\vartheta}) \coloneqq \prod_{p=1}^P \bar{\varphi}_{\varLambda_{j,p}}(\vartheta_p),
  \end{equation}
for $j = 1, \ldots, N$.
In addition, the weight function is assumed to be separable in the sense that
  \begin{equation}
  w(\boldsymbol{\vartheta}) = \prod_{p=1}^P \bar{w}_p (\vartheta_p).
  \end{equation}
(Often we may even have $\bar{w}_p = \bar{w}$ for all $p=1, \ldots, P$.)
The univariate degrees $\varLambda_{j,p}$ can be stored in a matrix $\boldsymbol{\varLambda} \in \mathbf{N}_0^{N \times P}$ which has $N$ (yet unspecified) distinct rows.
For convenience, we assume that the first row contains only zeros, that is, $\varphi_1 \equiv \lVert w \rVert_{L^1(\varTheta)}^{-1/2}$ is the constant polynomial.

Due to orthogonality, all but the first block in the right hand side vector \eqref{eq:r} vanish.
The stiffness matrix \eqref{eq:Aijkl} can be written as
  \begin{equation}
  \boldsymbol{A} = \sum_{p=1}^P (\boldsymbol{Y}^{(p)} \otimes \boldsymbol{A}^{(p)}),
  \end{equation}
where the symmetric matrices of size $N \times N$ involving the actual parameters are defined by
  \begin{equation}
  \label{eq:Ypjl}
  Y^{(p)}_{j,l} = (\iota_p \varphi_j, \varphi_l)_{L_w^2(\varTheta)}
  \end{equation}
for $p = 1, \ldots, P$.

For a fixed univariate degree $n$, the largest possible degree matrix $\boldsymbol{\varLambda}$ contains $N = (n+1)^P$ rows, which is too much for most practical purposes.
A widely used alternative is to limit the row sum of the degree matrix to equal $n$.
This results in a so called total degree polynomial space, which is spanned by
  \begin{equation}
  \label{eq:tdspace}
  \{ \varphi_j \}_{j=1}^N = \mathopen{}\left\{ \prod_{p=1}^P \bar{\varphi}_{r_p}(\vartheta_p) \;\middle|\; \sum_{p=1}^P r_p \leq n \; \right\}\mathclose{}.
  \end{equation}
We collect some combinatorial results related to the total degree space in the following theorem.
Hereafter, a square matrix $\boldsymbol{V}$ whose diagonal entries are replaced by zeros, is denoted by $\offdiag(\boldsymbol{V})$.

\begin{thm}
\label{thm:comb}
Let $\boldsymbol{\varLambda} \in \mathbf{N}_0^{N \times P}$ be a degree matrix corresponding to the polynomial basis of a total degree space of $P$ variables and a total degree $n$, where $1 \leq n \ll P$.
Then
  \begin{enumerate}[(a)]
  \item \begin{equation}
        N = \binom{P+n}{n} = \frac{(P+n)!}{P!\,n!} = \frac{P^n}{n!} + \mathcal{O}(P^{n-1})
        \end{equation}
  \item \begin{equation} \label{eq:nnzLambda}
        \nnz(\boldsymbol{\varLambda}) = P \binom{P+n-1}{n-1} = \frac{Pn}{P+n} N = \frac{P^n}{(n-1)!} + \mathcal{O}(P^{n-1})
        \end{equation}
  \item
        \begin{equation}
        \nnz(\offdiag(\boldsymbol{Y}^{(p)})) = \frac{2}{P} \nnz(\boldsymbol{\varLambda}) = \frac{2P^{n-1}}{(n-1)!} + \mathcal{O}(P^{n-2})
        \end{equation}
        and the positions of the nonzero elements of $\offdiag(\boldsymbol{Y}^{(p)})$ are disjoint for $1 \leq p \leq P$.
  \end{enumerate}
\end{thm}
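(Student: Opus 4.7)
The plan is to handle the three items separately, as (a) is a plain stars-and-bars count, (b) reduces to (a) via symmetry, and (c) requires using the tensor-product/three-term-recurrence structure that orthonormal polynomials always satisfy.

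For part (a), I would observe that by construction the rows of $\boldsymbol{\varLambda}$ are exactly the multi-indices $(r_1,\ldots,r_P)\in\mathbf{N}_0^P$ with $\sum_p r_p\le n$. Introducing a slack coordinate $r_0=n-\sum_p r_p\ge 0$ gives a bijection with $(P+1)$-tuples summing to $n$, whose number is $\binom{P+n}{n}$ by the standard stars-and-bars argument. The asymptotic expansion $N=P^n/n!+\mathcal{O}(P^{n-1})$ follows by writing $(P+n)!/P!$ as the rising factorial $(P+1)(P+2)\cdots(P+n)$ and expanding.

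For part (b), by the symmetry of the total-degree space under permutation of coordinates, every column of $\boldsymbol{\varLambda}$ contains the same number of nonzero entries, so it suffices to count rows in which $r_p\ge 1$ for one fixed $p$. Substituting $r_p'=r_p-1$ turns this into counting multi-indices in $\mathbf{N}_0^P$ with sum $\le n-1$, yielding $\binom{P+n-1}{n-1}$ by part~(a). Multiplying by $P$ gives the stated count; the identity $\binom{P+n-1}{n-1}=\frac{n}{P+n}\binom{P+n}{n}$ follows from the factorial formula and delivers the $\frac{Pn}{P+n}N$ form, and the same expansion as above yields the asymptotic $P^n/(n-1)!+\mathcal{O}(P^{n-1})$.

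Part (c) is the real content. I would exploit that $w$ and $\varphi_j$ are both separable, so
\begin{equation}
Y^{(p)}_{j,l}=\mathopen{}\left(\prod_{k\ne p}\int_E\bar{\varphi}_{\varLambda_{j,k}}\bar{\varphi}_{\varLambda_{l,k}}\bar{w}_k\,\mathrm{d}\vartheta\right)\mathclose{}\int_E\vartheta\,\bar{\varphi}_{\varLambda_{j,p}}(\vartheta)\bar{\varphi}_{\varLambda_{l,p}}(\vartheta)\bar{w}_p(\vartheta)\,\mathrm{d}\vartheta.
\end{equation}
Univariate orthonormality forces $\varLambda_{j,k}=\varLambda_{l,k}$ for every $k\ne p$, or else the product vanishes. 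For the remaining factor, the standard three-term recurrence for orthonormal polynomials expresses $\vartheta\bar{\varphi}_m$ as a linear combination of $\bar{\varphi}_{m-1},\bar{\varphi}_m,\bar{\varphi}_{m+1}$, so the integral is nonzero only when $|\varLambda_{j,p}-\varLambda_{l,p}|\le 1$. Restricting to the off-diagonal $j\ne l$ and combining with the first condition forces the difference to be exactly $\pm 1$, and in particular the rows $\boldsymbol{\varLambda}_{j,:}$ and $\boldsymbol{\varLambda}_{l,:}$ differ in precisely one coordinate. Counting the upper-triangular part: such pairs are in bijection with rows $\boldsymbol{\varLambda}_{j,:}$ whose $p$-coordinate can be raised by one while remaining in the space, i.e.\ multi-indices with $\sum_k r_k\le n-1$; by (a) there are $\binom{P+n-1}{n-1}$ of them. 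Doubling by symmetry yields $\nnz(\offdiag(\boldsymbol{Y}^{(p)}))=2\binom{P+n-1}{n-1}=\frac{2}{P}\nnz(\boldsymbol{\varLambda})$. Disjointness is then immediate: a single pair $(j,l)$ that was simultaneously nonzero in $\offdiag(\boldsymbol{Y}^{(p)})$ and $\offdiag(\boldsymbol{Y}^{(q)})$ with $p\ne q$ would force $\boldsymbol{\varLambda}_{j,:}$ and $\boldsymbol{\varLambda}_{l,:}$ to differ in position $p$ only and in position $q$ only, a contradiction.

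The main obstacle is the characterization step in (c): one has to recognize that \emph{any} family of univariate orthonormal polynomials with respect to a positive weight on an interval automatically satisfies a three-term recurrence, so that multiplication by $\vartheta$ raises or lowers the degree by at most one; everything else is bookkeeping building on parts (a) and (b).
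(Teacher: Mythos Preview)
Your argument is correct in all three parts. Note, however, that the paper does not actually prove this theorem: its ``proof'' consists of the single sentence \emph{``We skip the proof here but refer to, e.g., [ernst10, ghanem91] for similar results with proofs.''} So your write-up is not merely a match for the paper's approach but a genuine improvement, supplying the details the authors omit. The stars-and-bars count in~(a), the symmetry-and-shift argument in~(b), and the three-term-recurrence analysis in~(c) are all the standard tools one would expect, and your treatment is clean.

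Two minor remarks on~(c). First, your phrase ``upper-triangular part'' is slightly imprecise---what you are actually counting is the set of ordered pairs $(j,l)$ with $\varLambda_{l,p}=\varLambda_{j,p}+1$ and the remaining coordinates equal, which need not coincide with the upper triangle under an arbitrary row ordering; but since $\boldsymbol{Y}^{(p)}$ is symmetric this set is exactly half of the off-diagonal support, so the count is unaffected. Second, to get an \emph{equality} in $\nnz(\offdiag(\boldsymbol{Y}^{(p)}))=2\binom{P+n-1}{n-1}$ rather than just an upper bound, you implicitly use that the subdiagonal coefficients in the three-term recurrence never vanish, i.e.\ $\int_E\vartheta\,\bar{\varphi}_m\bar{\varphi}_{m+1}\bar{w}_p\,\mathrm{d}\vartheta\neq 0$ for all~$m$; this is a standard fact for orthonormal polynomials with respect to a positive weight, but it is worth stating explicitly.
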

\begin{proof}
We skip the proof here but refer to, e.g., \cite{ernst10,ghanem91} for similar results with proofs.
\end{proof}

In what follows, we will use the result $\nnz(\boldsymbol{\varLambda}) = \mathcal{O}(P^n)$ from part (b) of the theorem.
This is, of course, quite elementary consequence of part (a), since the definition of the total degree space immediately yields $\nnz(\boldsymbol{\varLambda}) \leq nN$.
Note that the quantities in parts (a) and (b) of the theorem are symmetric in terms of $P$ and $n$ (as is part (c) after summing over $p$).
Thus, the asymptotic formulae for the case $n \longrightarrow \infty$ can be obtained by switching the roles of $P$ and $n$.

Some alternatives for the total degree spaces can be found in \cite{chkifa13}.
We also mention that due to the locality of the splines, it seems reasonable to ignore some cross-terms including variables that correspond to coefficients of splines being supported far away from each other.
This idea is not completely heuristic, see the asymptotic formula in elliptic case with small inclusions \cite[Chap.~5]{ammari07}.
We have also numerically observed this kind of behaviour.

Ordinarily, one uses Legendre polynomials and a constant weight function, but other choices are possible as well.
We refer to \cite{gautschi04} for discussion on orthogonal polynomials and \cite{CHQZ2} for more detailed analysis of different types of spectral approximations.
The convergence rate of spectral approximation depends on the smoothness of the forward solution with respect to the parameters.
The affine representations \eqref{eq:a} and \eqref{eq:aMean} are known to result in an analytic parameter dependence in $\varOmega \times (0,T)$, see \cite{hoang13,nobile09}.
On the other hand, when the number of parameters is large, it is only possible to use a low polynomial degree and thus the asymptotic convergence is not the main interest.
Finally, we mention that the rest of this paper is equally applicable to the case where the parameter dependence is approximated with a spectral collocation method.
In some cases, for example when the diffusivity parametrization is based on a boundary curve between two different diffusivity values, the collocation method may be significally more straightforward to formulate and implement.

\subsection{Time integration}
Let us continue by discussing how the semi-discrete equation \eqref{eq:ode} can be solved in time.
In principle, any time integration method can be used to solve the resulting system of ordinary differential equations.
However, diffusion equations typically require very small time steps if an explicit time integration method is used.
This applies to parametric equations as well, see, e.g., \cite{powell09} for some eigenvalue bounds.
Implicit methods, on the other hand, require the solution of a full system of size $MN$ having a nontrivial sparsity structure.

Explicit methods with larger stability regions for parabolic equations can be derived from the class of so called Runge--Kutta--Chebyshev methods \cite{verwer96}.
An alternative to explicit and implicit methods are semi-implicit or implicit-explicit (IMEX) methods \cite{ascher95,zhong96}.
Here we resort to a first-order semi-implicit Euler method, which in \cite{xiu09} was shown to be unconditionally stable for parametric diffusion equations with symmetric Jacobi weights (in particular, with a constant weight and Legendre polynomials).
We already know that the mass matrix $\boldsymbol{B}$ is block-diagonal and actually has constant blocks.
The feasibility of the semi-implicit method is based on the diagonal dominance of the matrices $\boldsymbol{Y}^{(p)}$ and the decomposition
  \begin{equation}
  \boldsymbol{A} = \mu \boldsymbol{I} \otimes \boldsymbol{A}^\bullet + \sum_{p=1}^P (\offdiag(\boldsymbol{Y}^{(p)}) \otimes \boldsymbol{A}^{(p)}) \eqqcolon \boldsymbol{D} + \boldsymbol{S},
  \end{equation}
where $\boldsymbol{D} \in \mathbf{R}^{MN \times MN}$ is a block-diagonal matrix with constant blocks and $\mu > 0$.
This decomposition follows somewhat directly if the diffusivity is parametrized as in \eqref{eq:aMean} with $\bar{a} \equiv \mu$, see also \cite{ernst10}.

Let us denote $\hat{\boldsymbol{u}}^{(k)} \coloneqq \hat{\boldsymbol{u}}(k \delta)$, where $k \in \mathbf{N}$ and $\delta > 0$ is a time step.
Furthermore, let
  \begin{equation}
  \hat{\boldsymbol{r}}^{(k+1/2)} \coloneqq \frac{1}{\delta} \int_{k \delta}^{(k+1)\delta} \hat{\boldsymbol{r}}(t) \,\mathrm{d}t
  \end{equation}
be the mean value of the right hand side vector \eqref{eq:r} over one time step.
Starting from an initial vector $\hat{\boldsymbol{u}}^{(0)}$, which similar to $\hat{\boldsymbol{r}}(t)$ contains in general only $M$ nonzero values, the semi-implicit Euler method presented in \cite{xiu09} can be written as
  \begin{equation}
  \label{eq:timeode}
  (\boldsymbol{B} + \delta \boldsymbol{D}) \hat{\boldsymbol{u}}^{(k+1)}
  = (\boldsymbol{B} - \delta \boldsymbol{S}) \hat{\boldsymbol{u}}^{(k)} + \delta \hat{\boldsymbol{r}}^{(k+1/2)}.
  \end{equation}

Let us define
  \begin{equation}
  \mat(\hat{\boldsymbol{u}}(t)) \coloneqq
    \begin{bmatrix}
    \hat{u}_{1,1}(t) & \cdots & \hat{u}_{1,N}(t) \\
    \vdots & \ddots & \vdots \\
    \hat{u}_{M,1}(t) & \cdots & \hat{u}_{M,N}(t)
    \end{bmatrix} \in \mathbf{R}^{M \times N}
  \end{equation}
as the matricization of the vector $\hat{\boldsymbol{u}}(t)$ defined in \eqref{eq:uhat}.
Correspondingly, we define the vectorization $\vecz(\mat(\hat{\boldsymbol{u}}(t))) \coloneqq \hat{\boldsymbol{u}}(t) \in \mathbf{R}^{MN}$.
Due to the block structure of the matrices $\boldsymbol{B}$ and $\boldsymbol{D}$, the algorithm \eqref{eq:timeode} can be efficiently implemented based on the following rules:
  \begin{enumerate}
  \item Compute $\boldsymbol{\varXi} = \boldsymbol{B}^\bullet \mat(\hat{\boldsymbol{u}}^{(k)})$.
  \item Compute $\boldsymbol{\xi} = \vecz(\boldsymbol{\varXi}) - \delta \boldsymbol{S} \hat{\boldsymbol{u}}^{(k)} + \delta \hat{\boldsymbol{r}}^{(k+1/2)}$.
  \item Solve $\boldsymbol{\varUpsilon} \in \mathbf{R}^{M \times N}$ from $(\boldsymbol{B}^\bullet + \delta \mu \boldsymbol{A}^\bullet) \boldsymbol{\varUpsilon} = \mat(\boldsymbol{\xi})$.
  \item Set $\hat{\boldsymbol{u}}^{(k+1)} = \vecz(\boldsymbol{\varUpsilon})$.
  \end{enumerate}
The solution to the system in the third step can be obtained by explicitly storing the inverse of the (time-independent) matrix $\boldsymbol{B}^\bullet + \delta \mu \boldsymbol{A}^\bullet$ or storing the Cholesky factor and performing triangular substitutions.
For large $M$, iterative methods such as the deflated conjugate gradient method may also be used \cite{saad00}.

The matrices $\boldsymbol{B}^\bullet$ and $\boldsymbol{A}^\bullet$ have $\mathcal{O}(M)$ nonzero elements and their sparsity structure resulting from standard FEM discretization is well-studied and can be exploited.
The only matrix of size $MN \times MN$ that has to be stored in the proposed algorithm is $\boldsymbol{S}$, which is very sparse.
Indeed, theorem \ref{thm:comb} and the definition \eqref{eq:eta} yield
  \begin{equation}
  \nnz(\boldsymbol{S}) = \sum_{p=1}^P \nnz(\offdiag(\boldsymbol{Y}^{(p)})) \nnz(\boldsymbol{A}^{(p)}) = \frac{2nN \eta \nnz(\boldsymbol{A}^\bullet)}{P+n}.
  \end{equation}
If $\eta = \mathcal{O}(P)$, then $\nnz(\boldsymbol{S}) = \mathcal{O}(MN)$.
However, if $P$ is large and the functions $\psi_p$ are splines or otherwise supported on a small region, then $\eta = \mathcal{O}(1)$ and the matrix $\boldsymbol{S}$ has essentially less nonzero elements than the vectors $\hat{\boldsymbol{u}}^{(k)}$ for $k>0$.

Naturally, the vectors $\hat{\boldsymbol{u}}^{(k)}$ can be discarded after computing the next solution $\hat{\boldsymbol{u}}^{(k+1)}$.
Thus, arbitrarily small time steps can be used without imposing massive memory requirements, and the fact that the method is only first-order, is not necessarily an issue.
As suggested in \cite{xiu09}, the proposed method can be modified to perform a Jacobi iteration for parametric or stochastic elliptic equations.
In addition, we note that preconditioned Krylov subspace methods for time-independent parametric or stochastic problems may employ structures that are similar to those presented here.
See, e.g., \cite{ullmann10} and the references therein for more information on that subject.

\section{Estimating parameters from boundary data}
\label{sec:inverse}

In this section, we consider the inverse problem of determining the diffusion coefficient from boundary measurements.
As presented in section \ref{ssec:setting}, the continuous formulation of the inverse problem is to find a diffusivity $a \in L_+^\infty(\varOmega)$ such that $\gamma u$, which is the trace of the solution corresponding to $a$, equals (or is close to) the measurement $\tilde{U} \colon \partial \varOmega \times (0,T) \to \mathbf{R}$.

A practical measurement contains only finitely many, say $Q$, values.
That is, we consider a measurement vector $\tilde{\boldsymbol{U}} \in \mathbf{R}^Q$ satisfying $\tilde{U}_q \approx u(\boldsymbol{x}^{(q)}, t^{(q)})$, where $u$ is the temperature and $\{(\boldsymbol{x}^{(q)}, t^{(q)})\}_{q=1}^Q \subset \partial \varOmega \times (0,T)$ defines the physical coordinates of the observations.
We denote the measurement only as an approximation of the temperature due to unavoidable errors in measurements and uncertainties in the problem setting.
The parametric numerical solution corresponding to the coordinates $\{(\boldsymbol{x}^{(q)}, t^{(q)})\}$ can be written as $\boldsymbol{U} \colon \varTheta \to \mathbf{R}^Q$, which satisfies
  \begin{equation}
  \label{eq:Utheta}
  \boldsymbol{U}(\boldsymbol{\vartheta}) =
  \begin{bmatrix}
  u_{M,N}(\boldsymbol{x}^{(1)}, t^{(1)}, \boldsymbol{\vartheta}) \\
  \vdots \\
  u_{M,N}(\boldsymbol{x}^{(Q)}, t^{(Q)}, \boldsymbol{\vartheta})
  \end{bmatrix}.
  \end{equation}
Formally, the inverse problem can now be written as a parameter estimation problem
  \begin{equation}
  \label{eq:min}
  \argmin_{\boldsymbol{\vartheta} \in \varTheta} \lVert \boldsymbol{U} (\boldsymbol{\vartheta}) - \tilde{\boldsymbol{U}} \rVert_2^2,
  \end{equation}
where $\lVert \cdot \rVert_2$ denotes the Euclidean norm and the diffusivity $a$ can be computed from \eqref{eq:a}.
Due to the ill-posedness, however, the minimization has to be regularized in order to avoid meaningless reconstructions.

\subsection{Regularized nonlinear least squares}
\label{ssec:gn}
Let us briefly sketch a simple Gauss--Newton algorithm with line search for a minimization problem of the type \eqref{eq:min}.
Assume $\varTheta = \mathbf{R}^P$ for a moment.
Starting from $\boldsymbol{\vartheta}^{(0)} \in \varTheta$ and $k=1$, the algorithm produces a sequence of parameter vectors according to the following steps:
  \begin{enumerate}
  \item Solve a \emph{linear} least squares problem
    \begin{equation}
    \boldsymbol{\varDelta}_\text{opt} \coloneqq \argmin_{\boldsymbol{\varDelta} \in \mathbf{R}^P}
    \lVert \boldsymbol{J}_{\boldsymbol{U}}(\boldsymbol{\vartheta}^{(k-1)}) \boldsymbol{\varDelta} + \boldsymbol{U} (\boldsymbol{\vartheta}^{(k-1)}) - \tilde{\boldsymbol{U}} \rVert_2^2.
    \end{equation}
  \item Solve a one-dimensional optimization problem
    \begin{equation}
    \alpha_\text{opt} \coloneqq \argmin_{\alpha \in \mathbf{R}_+} \lVert \boldsymbol{U} (\boldsymbol{\vartheta}^{(k-1)} + \alpha \boldsymbol{\varDelta}_\text{opt}) - \tilde{\boldsymbol{U}} \rVert_2^2.
    \end{equation}
  \item Set $\boldsymbol{\vartheta}^{(k)} = \boldsymbol{\vartheta}^{(k-1)} + \alpha_\text{opt} \boldsymbol{\varDelta}_\text{opt}$ and increase $k$ by one.
  \end{enumerate}
These steps are repeated until a suitable stopping criterion is satisfied.
Here, $\boldsymbol{J}_{\boldsymbol{U}} \colon \mathbf{R}^P \to \mathbf{R}^{Q \times P}$ is the Jacobian matrix of the mapping $\boldsymbol{U}$.
The linear least squares problem in the first step is usually solved by QR decomposing the Jacobian \cite{golub13,nocedal99}, although it is also possible to make the algorithm more efficient by employing the conjugate gradient method in case $P$ is large \cite{hanke97,langer10,langer07}. (As our spectral Galerkin method does not allow very large $P$, we exclude such considerations.)
Assuming that the Jacobian $\boldsymbol{J}_{\boldsymbol{U}}$ and the vector $\boldsymbol{U}$ have already been evaluated at $\boldsymbol{\vartheta}^{(k-1)}$, the computational complexity of the QR decomposition is $\mathcal{O}(QP^2)$.
After that, the search direction $\boldsymbol{\varDelta}_\text{opt}$ can be obtained easily with triangular substitution.
The line search of the second step requires only few evaluations of $\boldsymbol{U}$, since an approximate solution for $\alpha_\text{opt}$ is usually enough.

We refer to \cite{nocedal99} for more detailed discussion about nonlinear least squares algorithms.
If $\varTheta \neq \mathbf{R}^P$, we also need to specify and implement constraints, which we ignored above.
A common alternative to the standard Gauss--Newton algorithm (or the one with line search) is the Levenberg--Marquardt method, which essentially consists of the same subproblems as Gauss--Newton:\ evaluating $\boldsymbol{U}$ and its Jacobian $\boldsymbol{J}_{\boldsymbol{U}}$, and solving a linear least squares problem involving the Jacobian.

If the problem \eqref{eq:min} is replaced by a Tikhonov regularized version
  \begin{equation}
  \label{eq:minreg}
  \argmin_{\boldsymbol{\vartheta} \in \varTheta} \left\{ \lVert \boldsymbol{U} (\boldsymbol{\vartheta}) - \tilde{\boldsymbol{U}} \rVert_2^2 + \lambda^2 \lVert \boldsymbol{G}(\boldsymbol{\vartheta}) \rVert_2^2 \right\}
  = \argmin_{\boldsymbol{\vartheta} \in \varTheta} \left\lVert \begin{bmatrix} \boldsymbol{U} (\boldsymbol{\vartheta}) - \tilde{\boldsymbol{U}} \\ \lambda \boldsymbol{G}(\boldsymbol{\vartheta}) \end{bmatrix} \right\rVert_2^2,
  \end{equation}
the same principles still apply, assuming that the regularization function $\boldsymbol{G}$ is easy to evaluate and differentiate, e.g., $\boldsymbol{G}$ is a matrix.
A Bayesian interpretation for the inverse problem often results in a similar minimization problem as \eqref{eq:minreg}, if the maximum a posteriori estimate is sought.
It is also possible to employ more advanced methods such as the iteratively regularized Gauss--Newton method and Lepski{\u\i} balancing principle \cite{bauer09}, while still having essentially the same subproblems.

\subsection{Evaluating polynomials and derivatives}

The nonlinear mapping $\boldsymbol{U} \colon \varTheta \to \mathbf{R}^Q$ in \eqref{eq:Utheta} can be decomposed as
  \begin{equation}
  \label{eq:Udecomp}
  \boldsymbol{U}(\boldsymbol{\vartheta}) = \boldsymbol{V} \boldsymbol{\varphi}(\boldsymbol{\vartheta}),
  \end{equation}
where the matrix $\boldsymbol{V} \in \mathbf{R}^{Q \times N}$ satisfies
  \begin{equation}
  V_{q,j} = \sum_{i=1}^M \hat{u}_{i,j}(t_q) \phi_i(\boldsymbol{x}_q)
  \end{equation}
and the nonlinear part $\boldsymbol{\varphi} \colon \varTheta \to \mathbf{R}^N$ is defined in \eqref{eq:varphij}.
The matrix $\boldsymbol{V}$ can be constructed in advance, that is, before performing any measurements or optimization.
On the other hand, the multivariate polynomials can be evaluated according to
  \begin{equation}
  \varphi_j(\boldsymbol{\vartheta}) = (\bar{\varphi}_0)^{P-\lvert \mathcal{L}_j \rvert} \prod_{p \in \mathcal{L}_j} \bar{\varphi}_{\varLambda_{j,p}}(\vartheta_p),
  \end{equation}
where 
  \begin{equation}
  \mathcal{L}_j \coloneqq \{ p \mid \varLambda_{j,p} \neq 0 \}
  \end{equation}
contains the indices to non-constant univariate polynomials.
For a total degree space, the number of such indices must be $\lvert \mathcal{L}_j \rvert \leq n$ for each $j = 1,\ldots,N$.
In this case, and assuming $n = \mathcal{O}(1)$, the evaluation of $\boldsymbol{\varphi}$ requires $\mathcal{O}(N)$ floating point operations.
The total complexity of evaluating $\boldsymbol{U}$ is thus determined by the matrix-vector multiplication \eqref{eq:Udecomp}, which takes $\mathcal{O}(NQ)$.

The Jacobian matrix $\boldsymbol{J}_{\boldsymbol{U}} \colon \varTheta \to \mathbf{R}^{Q \times P}$ can be written as
  \begin{equation}
  \label{eq:Jdecomp}
  \boldsymbol{J}_{\boldsymbol{U}}(\boldsymbol{\vartheta}) = \boldsymbol{V} \boldsymbol{J}_{\boldsymbol{\varphi}}(\boldsymbol{\vartheta}),
  \end{equation}
where the \emph{basis Jacobian} $\boldsymbol{J}_{\boldsymbol{\varphi}} \colon \varTheta \to \mathbf{R}^{N \times P}$ contains the partial derivatives of the multivariate polynomials.
Note that for a general argument $\boldsymbol{\vartheta} \in \varTheta$, the basis Jacobian has exactly the same sparsity structure as the degree matrix $\boldsymbol{\varLambda}$ defining the underlying polynomial space.
For that reason, the evaluation of $\boldsymbol{J}_{\boldsymbol{\varphi}}$ becomes quite cheap.
Indeed, for any $1 \leq j \leq N$ and $p \in \mathcal{L}_j$, we have
  \begin{equation}
  J_{\boldsymbol{\varphi}}(\boldsymbol{\vartheta})_{j,p} = (\bar{\varphi}_0)^{P-\lvert \mathcal{L}_j \rvert} \bar{\varphi}'_{\varLambda_{j,p}}(\vartheta_p)
  \prod_{q \in \mathcal{L}_j \setminus p} \bar{\varphi}_{\varLambda_{j,q}}(\vartheta_q),
  \end{equation}
and for $p \notin \mathcal{L}_j$, the corresponding entries in the basis Jacobian $\boldsymbol{J}_{\boldsymbol{\varphi}}$ vanish.
Again, if $\lvert \mathcal{L}_j \rvert = \mathcal{O}(1)$ for each $j$, the evaluation of $\boldsymbol{J}_{\boldsymbol{\varphi}}$ requires $\mathcal{O}(N)$ operations.
Due to the sparsity the matrix-matrix product in \eqref{eq:Jdecomp} takes only $\mathcal{O}(NQ)$.

From theorem \ref{thm:comb} we immediately get the following:
\begin{cor}
\label{cor:complexity}
The computational cost of evaluating the parametric solution $\boldsymbol{U}$ and all its first-order partial derivatives is $\mathcal{O}(P^n Q)$, if the polynomial space of total degree $n$ is used to discretize the parameter domain.
In particular, choosing $n=2$ yields the same complexity as the QR decomposition in a Gauss--Newton step.
\end{cor}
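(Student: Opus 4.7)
The plan is to assemble the complexity estimates already derived in the preceding paragraphs together with the combinatorial counts from Theorem~\ref{thm:comb}. First, I would invoke the decompositions \eqref{eq:Udecomp} and \eqref{eq:Jdecomp}, which reduce the evaluation of $\boldsymbol{U}(\boldsymbol{\vartheta})$ and $\boldsymbol{J}_{\boldsymbol{U}}(\boldsymbol{\vartheta})$ to (i) the evaluation of $\boldsymbol{\varphi}(\boldsymbol{\vartheta})$ and $\boldsymbol{J}_{\boldsymbol{\varphi}}(\boldsymbol{\vartheta})$, followed by (ii) a multiplication from the left by the precomputed matrix $\boldsymbol{V} \in \mathbf{R}^{Q \times N}$.

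For step (i), I would use the formulae displayed above the corollary: for a total degree space with $n = \mathcal{O}(1)$, each $\varphi_j(\boldsymbol{\vartheta})$ involves at most $n$ nontrivial univariate factors, so the entire vector $\boldsymbol{\varphi}(\boldsymbol{\vartheta})$ costs $\mathcal{O}(N)$ floating point operations, and the same argument shows that $\boldsymbol{J}_{\boldsymbol{\varphi}}(\boldsymbol{\vartheta})$ costs $\mathcal{O}(N)$, since its sparsity pattern coincides with that of $\boldsymbol{\varLambda}$ and $\nnz(\boldsymbol{\varLambda}) = \mathcal{O}(N)$ by part (b) of Theorem~\ref{thm:comb}. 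For step (ii), the matrix-vector product $\boldsymbol{V}\boldsymbol{\varphi}(\boldsymbol{\vartheta})$ costs $\mathcal{O}(NQ)$, and the sparse matrix-matrix product $\boldsymbol{V}\boldsymbol{J}_{\boldsymbol{\varphi}}(\boldsymbol{\vartheta})$ costs $\mathcal{O}(\nnz(\boldsymbol{J}_{\boldsymbol{\varphi}}) \cdot Q) = \mathcal{O}(NQ)$.

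To conclude, I would substitute the cardinality estimate from part (a) of Theorem~\ref{thm:comb}, namely $N = \binom{P+n}{n} = \mathcal{O}(P^n)$ for fixed $n$, to obtain the total cost $\mathcal{O}(NQ) = \mathcal{O}(P^n Q)$ for evaluating both $\boldsymbol{U}$ and $\boldsymbol{J}_{\boldsymbol{U}}$ at a single point $\boldsymbol{\vartheta} \in \varTheta$. Finally, setting $n=2$ yields $\mathcal{O}(P^2 Q)$, which matches the $\mathcal{O}(QP^2)$ cost of the QR decomposition noted in Section~\ref{ssec:gn}.

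No step here requires any new technical ingredient; the only mild subtlety is verifying that the sparse matrix-matrix product in step (ii) indeed inherits the $\mathcal{O}(NQ)$ bound rather than the dense $\mathcal{O}(NPQ)$. This follows immediately from $\nnz(\boldsymbol{J}_{\boldsymbol{\varphi}}) \le \nnz(\boldsymbol{\varLambda}) \le nN = \mathcal{O}(N)$, which is the one observation worth stating explicitly in the write-up.
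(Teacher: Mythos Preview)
Your proposal is correct and follows exactly the line of reasoning the paper intends: the paper does not give a separate proof but simply states the corollary as an immediate consequence of Theorem~\ref{thm:comb} combined with the $\mathcal{O}(N)$ and $\mathcal{O}(NQ)$ cost estimates derived in the paragraphs immediately preceding it. Your write-up makes this implicit argument explicit, including the one point worth emphasizing, namely that $\nnz(\boldsymbol{J}_{\boldsymbol{\varphi}}) = \nnz(\boldsymbol{\varLambda}) = \mathcal{O}(N)$ is what keeps the sparse product $\boldsymbol{V}\boldsymbol{J}_{\boldsymbol{\varphi}}$ at $\mathcal{O}(NQ)$.
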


In other words, a quadratic approximation of the parameter dependence does not pose a bottleneck for the efficiency of a Gauss--Newton or similar minimization scheme.
The result is nontrivial in the sense that a naïve finite difference approximation of the Jacobian $\boldsymbol{J}_{\boldsymbol{U}}$ would require $P$ function evaluations and thus have a complexity of $\mathcal{O}(P^{n+1}Q)$.
Let us also emphasize at this point that the complexity of the inverse problem is completely independent of spatial and temporal discretizations of the forward problem and it is also irrelevant whether the spectral approximation was obtained with the Galerkin or collocation method.

The linearized subproblems in a Gauss--Newton scheme can also be solved by, e.g., a conjugate gradient iteration instead of the QR decomposition.
For general complexity analysis of such nested iterations, see \cite{langer10}.
Extending the analysis to include a polynomial surrogate model is beyond the scope of this article.

Notice that the forward parametric solution can be solved with a larger spectral basis than what is used when evaluating the solution and its Jacobian.
This is useful, since it is difficult to choose in advance the optimal subset of multivariate polynomials.
After having the parametric solution at hand, one can simply discard those polynomials that correspond to columns of the matrix $\boldsymbol{V}$ having smallest (Euclidean) norm.
It is also possible to use different subsets of polynomials for $\boldsymbol{\varphi}$ and for $\boldsymbol{J}_{\boldsymbol{\varphi}}$.
Having fewer polynomials for the Jacobian results in the so called perturbed Gauss--Newton method \cite{gratton07}.
In particular, it is even possible to solve a single regular forward problem at each step while approximating the Jacobian with the pre-computed polynomials.

Finally, note that it is also possible to treat the inverse problem that is based on several different intitial conditions, boundary fluxes or other components of the problem setting.
This merely requires stacking the corresponding matrices $\boldsymbol{V}$ on top of each other.

\section{Numerical examples}
\label{sec:num}

The proposed method is demonstrated with simulated boundary data.
First, we solve a two-dimensional parametric forward problem corresponding to $P = 14^2 = 196$ bi-quadratic B-splines that are uniformly spaced so that they form a partition of unity on the unit square $\varOmega = (0,1)^2$.
We choose the parameter interval $E = (1/2,2)$, constant weight $w$ and employ polynomial space of total degree $n=2$, resulting to $N = 19503$ multivariate Legendre polynomials in accordance with part (a) of theorem \ref{thm:comb}.
We assume zero initial condition $u_0 \equiv 0$ and also set $f \equiv 0$.
For the horizontal boundaries, we assume homogeneous Neumann conditions, i.e., $g|_{x_2 \in \{0,1\}} \equiv 0$, whereas the vertical boundaries satisfy $g|_{x_1 = 0} = -20 t$ and $g|_{x_1 = 1} = 20 t$.
This corresponds to the case where two sides of a square-shaped object are insulated and two sides are heated or cooled with a heat flux which is linear in time.
The spatial discretization is performed with $M = 37^2 = 1369$ uniformly spaced piecewise linear FEM basis functions (corresponding to $2592$ triangular elements) and for the semi-implicit Euler method we choose the time step $\delta = 10^{-3}$.

The boundary data is generated by using $\tilde{M} = 129^2 = 16641$ FEM basis functions and the second order Crank--Nicolson time integration method with a step length $\tilde{\delta} = 10^{-3}$.
The measurement consists of $Q_s = 36$ spatial points that are uniformly distributed across the boundary (including corners), and of $Q_t = 13$ time instances $t^{(q)} \in \{0.01, 0.05, 0.09, \ldots, 0.49\}$.
Thus, the measurement vector $\tilde{\boldsymbol{U}}$ has $Q = Q_s Q_t = 468$ elements.
For each value, we add independent zero mean Gaussian noise realizations with standard deviation $\sigma = \sigma_0 \cdot \max_{1 \leq j \leq Q} \tilde{U}_j$ with some $\sigma_0 > 0$.

The reconstructions are computed by minimizing \eqref{eq:minreg} with the \texttt{lsqnonlin} function of Matlab.
The default algorithm \texttt{trust-region-reflective} handles bound constraints that are chosen to agree with the parameter domain $\varTheta = E^P$.
The regularization function $\boldsymbol{G}$ is chosen to be a discretized Laplace operator (i.e., a matrix which in one dimension would be tridiagonal with values $-1$, $2$ and $-1$) so that the minimization prefers smooth diffusivities.
The regularization parameter $\lambda$ is set such that the Morozov discrepancy principle
  \begin{equation}
  \label{eq:morozov}
  \lVert \boldsymbol{U}(\boldsymbol{\vartheta}) - \tilde{\boldsymbol{U}} \rVert_2 \approx \sqrt{Q} \sigma
  \end{equation}
holds for the minimizing vector $\boldsymbol{\vartheta}$.
Because the minimization process is very fast, the adjustment of $\lambda$ can be done, for example, by trial-and-error.

In the first two-dimensional example we consider the boundary data corresponding to a smooth diffusivity
  \begin{equation}
  \label{eq:a1}
  \tilde{a}(\boldsymbol{x}) = 1.25 + \frac{\sin(6 x_1) \cos(4 x_2)}{2},
  \end{equation}
which satisfies $0.75 \leq \tilde{a}(\boldsymbol{x}) \leq 1.75$ for all $\boldsymbol{x} \in \varOmega$.
Figure \ref{fig:2d2} shows the target diffusivity together with the reconstruction.
Here, the noise parameter is $\sigma_0 = 0.001$ and for the regularization we use $\lambda = 0.025$.
We see that the quality of the reconstruction is good.
A piecewise constant diffusivity taking values $1/2$ and $3/2$ is reconstructed in figure \ref{fig:2d7} by using the aforementioned values for the noise and regularization.
As expected, the reconstruction is far from exact in this nonsmooth case, because the regularization favors smooth diffusivities and such a piecewise constant diffusivity is also impossible to represent by using bi-quadratic B-splines.

Let us also consider the approximation error
  \begin{equation}
  \label{eq:approxerror}
  \lVert \boldsymbol{U}(\boldsymbol{\vartheta}) - \tilde{\boldsymbol{U}}_{\boldsymbol{\vartheta}} \rVert_2,
  \end{equation}
where $\boldsymbol{\vartheta}$ is the minimizing vector as in \eqref{eq:morozov} and $\boldsymbol{U}_{\boldsymbol{\vartheta}}$ denotes the simulated high-accuracy noiseless ($\sigma_0 = 0$) boundary data that is obtained by choosing the coefficient $\tilde{a}$ to be the just reconstructed diffusivity.
This error results from truncating the polynomial expansion (i.e., having a finite $N$) and also from the less accurate spatial and temporal discretizations of the parametric solution.
In the examples shown in figures \ref{fig:2d2} and \ref{fig:2d7}, the error values \eqref{eq:approxerror} are $0.11$ and $0.10$, respectively, which in these low-noise cases are slightly larger than $\sqrt{Q} \sigma$ in \eqref{eq:morozov}.
However, although this approximation error is not accounted for in \eqref{eq:morozov}, no instability of the reconstructions was observed in any of the numerical tests.

Some other smooth target diffusivities and their reconstructions are shown in figures \ref{fig:2d5} and \ref{fig:2d6}.
Also in these cases the target diffusivity values lie between $1/2$ and $2$.
Now the noise parameter is considerably larger, namely $\sigma_0 = 0.02$.
In order to approximately satisfy the Morozov discrepancy principle \eqref{eq:morozov}, we set $\lambda = 0.4$.
The reconstructions are still qualitatively correct.
The approximation errors \eqref{eq:approxerror} related to figures \ref{fig:2d5} and \ref{fig:2d6} are $1.20$ and $0.13$, respectively, which are now smaller than the noise level in \eqref{eq:morozov}.
The larger error in the third example may be explained by the target diffusivity values that are quite low; the parametric solution is usually most accurate when the diffusivity values are in the middle of the parameter interval $E$.

  \begin{figure}[t]
  \centering
  \includegraphics{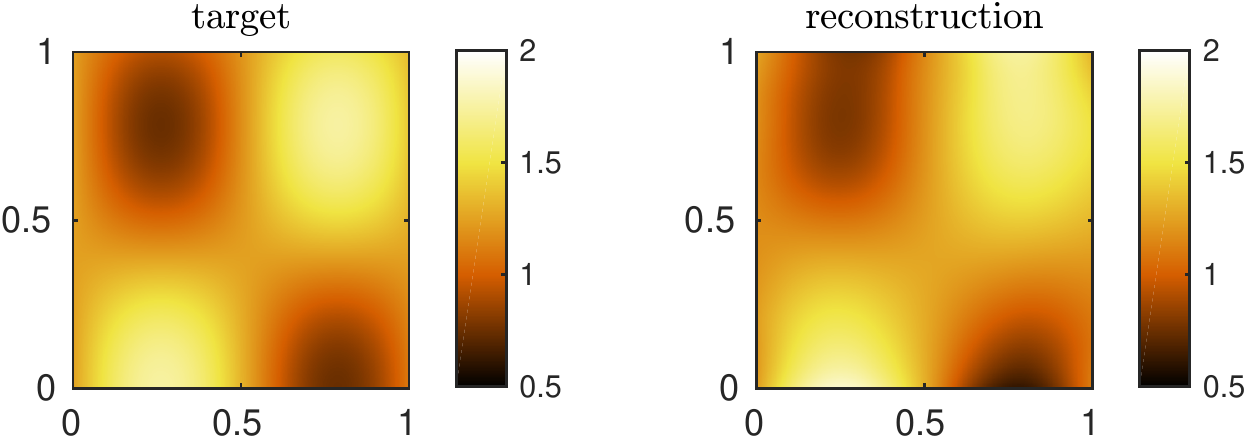}
  \caption{Two-dimensional target diffusivity $\tilde{a}$ \protect\eqref{eq:a1} on the left and the reconstruction $a$ on the right ($\sigma_0 = 0.001$).}
  \label{fig:2d2}
  \end{figure}

  \begin{figure}[t]
  \centering
  \includegraphics{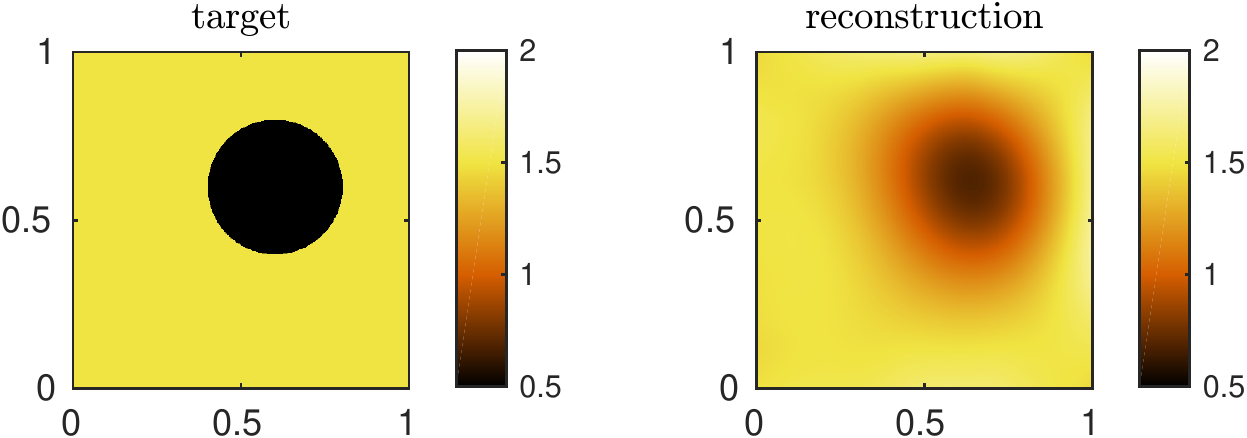}
  \caption{Two-dimensional piecewise constant target diffusivity $\tilde{a}$ on the left and the reconstruction $a$ on the right ($\sigma_0 = 0.001$).}
  \label{fig:2d7}
  \end{figure}

  \begin{figure}[t]
  \centering
  \includegraphics{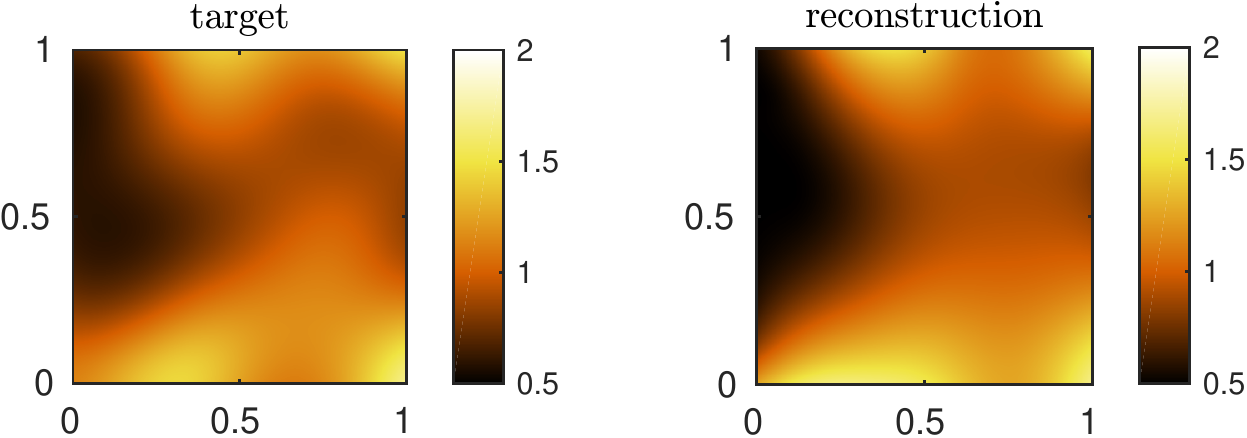}
  \caption{Two-dimensional target diffusivity $\tilde{a}$ on the left and the reconstruction $a$ on the right ($\sigma_0 = 0.02$).}
  \label{fig:2d5}
  \end{figure}

  \begin{figure}[t]
  \centering
  \includegraphics{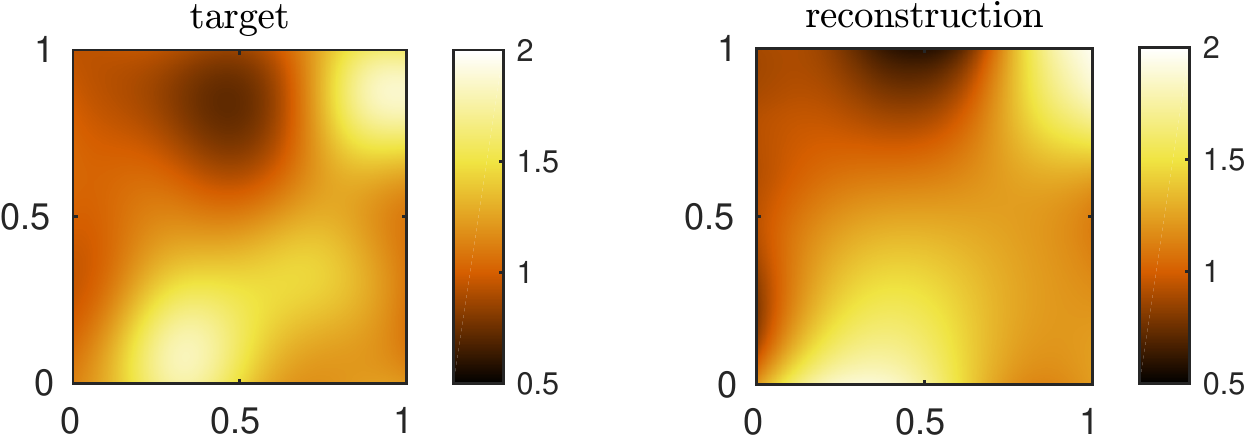}
  \caption{Two-dimensional target diffusivity $\tilde{a}$ on the left and the reconstruction $a$ on the right ($\sigma_0 = 0.02$).}
  \label{fig:2d6}
  \end{figure}

In our three-dimensional example we have $P = 6^3 = 216$ trilinear B-splines that form the partition of unity in the unit cube $\varOmega = (0,1)^3$.
As in the two-dimensional case, we use $E = (1/2,2)$, constant weight and $n=2$, which now results in $N = 23653$.
The Neumann boundary conditions on two opposing sides are $g|_{x_1 = 0} = -40 t$ and $g|_{x_1 = 1} = 40 t$, whereas the remaining four faces have homogeneous Neumann boundary conditions.
The initial value and the forcing term are set to zero as in the two-dimensional case.
In three-dimensional case, we use a piecewise linear finite element mesh with $M = 26^3 = 17576$ nodes and $93750$ tetrahedra.
The time step of the semi-implicit Euler method is still $\delta = 10^{-3}$.

The three-dimensional measurement data is generated by using $\tilde{M} = 65^3 = 274625$ piecewise linear basis functions, Crank--Nicolson time integration with a time step of $\tilde{\delta} = 10^{-3}$ and a diffusivity
  \begin{equation}
  \label{eq:a3}
  \tilde{a}(\boldsymbol{x}) = 1.25 + (0.5-x_3) \sin(6 x_1) \cos(4 x_2).
  \end{equation}
The measurement consists of $Q_s = 152$ spatial locations that are uniformly spaced across the boundary of the cube, and of $Q_t = 13$ time instances as before, so that $Q = Q_s Q_t = 1976$.
The noise model is the same as in the two-dimensional case with $\sigma_0 = 0.01$ and for the regularization we choose $\lambda = 0.09$.
Figure~\ref{fig:3d2} shows that the reconstruction is qualitatively correct in this three-dimensional setting.

  \begin{figure}[ht]
  \centering
  \includegraphics[scale=.9]{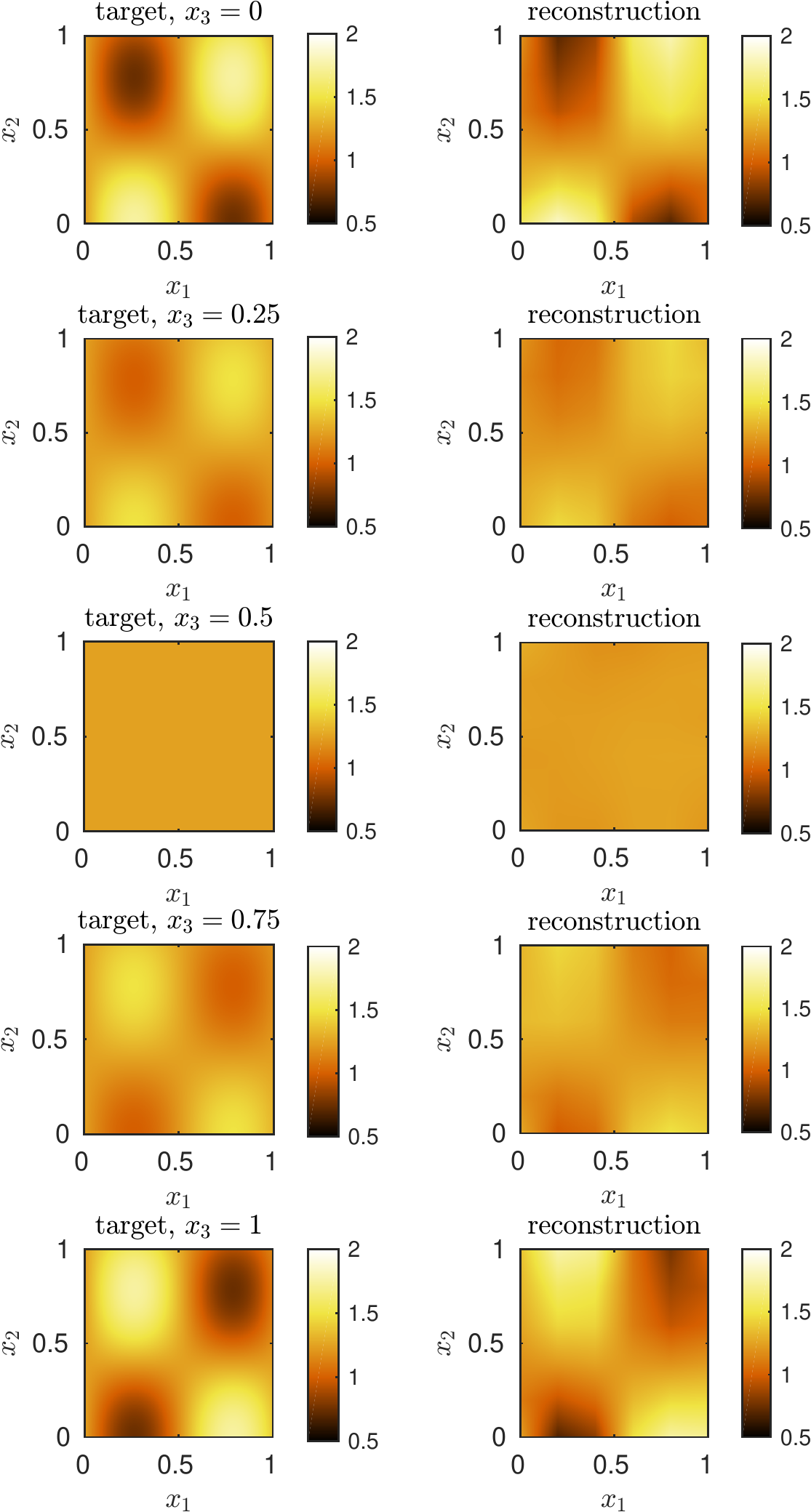}
  \caption{Left:\ slices of the three-dimensional target diffusivity \protect\eqref{eq:a3} in the unit cube. Right:\ corresponding slices of the reconstruction ($\sigma_0 = 0.01$).}
  \label{fig:3d2}
  \end{figure}

All computations were performed by using Matlab R2014b.
Computationally the most demanding case is the three-dimensional parametric forward problem in which $MN \approx 4 \cdot 10^8$ and quite a large amount of memory is needed even to store the intermediate vector during the time integration.
On the other hand, the measurement consists of only $Q \approx M/10$ values and thus the matrix $\boldsymbol{V} \in \mathbf{R}^{Q \times N}$ can be handled easily even in the three-dimensional case.
For a fixed regularization parameter, the reconstruction itself took at most a few seconds in all numerical experiments on an up-to-date desktop computer.
This is the only step that cannot be performed offline, that is, prior to the measurements.
Further improvements could still be expected by implementing a customized least squares solver and fine-tuning the stopping criterion for the iterative optimization.
Moreover, combining our method with a state-of-the-art iteratively regularized Gauss--Newton method with inner conjugate gradient iteration is left for future studies.

\section{Conclusions}
\label{sec:conclusions}

We have studied a time-dependent parametric partial differential equation and the related inverse boundary value problem.
The parametric forward problem was solved by using the spectral Galerkin method in the parameter domain, finite element method in the spatial domain and a semi-implicit Euler method in the time interval.
The inverse problem was interpreted as a nonlinear least squares problem accompanied with a Tikhonov regularization.

It was shown that having a quadratic approximation for the parameter dependence results in an efficient inverse algorithm, where the evaluation of the multivariate polynomials and their derivatives does not constitute an essential bottleneck for a Gauss--Newtonian least squares method, if QR decompositions are used for solving the linear subproblems.
In particular, the inverse problem can be solved independently of the physical discretization of the forward problem.
The numerical results indicated that the quadratic approximation is indeed accurate enough for qualitatively correct reconstructions.
See \cite{langer10,langer07} for alternative approaches that are more efficient if the number of parameters is large and the QR decomposition becomes too expensive.

The proposed method is versatile and can be applied to elliptic problems, including EIT, as well.
The future research will concentrate on different regularization techniques such as sparsity promoting terms that do not satisfy the differentiability assumptions posed here.
In addition, different parametrizations will be studied.

\section*{Acknowledgments}
The author would like to thank professor Nuutti Hyvönen and the anonymous referees for comments and suggestions.

\bibliographystyle{ppde}
\bibliography{ref}

\begin{thebibliography}{10}
\expandafter\ifx\csname urlstyle\endcsname\relax
  \providecommand{\doi}[1]{doi:\discretionary{}{}{}#1}\else
  \providecommand{\doi}{doi:\discretionary{}{}{}\begingroup
  \urlstyle{rm}\Url}\fi

\bibitem{ammari07}
\textsc{H.~Ammari and H.~Kang}.
\newblock \emph{Polarization and Moment Tensors:\ With Applications to Inverse
  Problems and Effective Medium Theory}.
\newblock Springer, 2007.

\bibitem{ascher95}
\textsc{U.~M. Ascher, S.~J. Ruuth, and B.~T.~R. Wetton}.
\newblock Implicit-explicit methods for time-dependent partial differential
  equations.
\newblock \emph{SIAM Journal on Numerical Analysis}, 32:797--823, 1995.

\bibitem{babuska10}
\textsc{I.~Babu\v{s}ka, F.~Nobile, and R.~Tempone}.
\newblock A stochastic collocation method for elliptic partial differential
  equations with random input data.
\newblock \emph{SIAM Review}, 52:317--355, 2010.

\bibitem{babuska04}
\textsc{I.~Babu\v{s}ka, R.~Tempone, and G.~E. Zouraris}.
\newblock {G}alerkin finite element approximations of stochastic elliptic
  partial differential equations.
\newblock \emph{SIAM Journal on Numerical Analysis}, 42:800--825, 2004.

\bibitem{back11}
\textsc{J.~B\"{a}ck, F.~Nobile, L.~Tamellini, and R.~Tempone}.
\newblock Stochastic spectral {G}alerkin and collocation methods for {PDE}s
  with random coefficients:\ a numerical comparison.
\newblock In \emph{Spectral and High Order Methods for Partial Differential
  Equations}, volume~76 of \emph{Lecture Notes in Computational Science and
  Engineering}, pages 43--62. Springer, 2011.

\bibitem{bakirov05}
\textsc{V.~F. Bakirov and R.~A. Kline}.
\newblock Diffusion-based thermal tomography.
\newblock \emph{Journal of Heat Transfer}, 127:1276--1279, 2005.

\bibitem{bauer09}
\textsc{F.~Bauer, T.~Hohage, and A.~Munk}.
\newblock Iteratively regularized {G}auss--{N}ewton method for nonlinear
  inverse problems with random noise.
\newblock \emph{SIAM Journal on Numerical Analysis}, 47:1827--1846, 2009.

\bibitem{borcea02}
\textsc{L.~Borcea}.
\newblock Electrical impedance tomography.
\newblock \emph{Inverse Problems}, 18:R99--R136, 2002.

\bibitem{CHQZ2}
\textsc{C.~Canuto, M.~Y. Hussaini, A.~Quarteroni, and T.~A. Zang, Jr.}
\newblock \emph{Spectral Methods:\ Fundamentals in Single Domain}.
\newblock Springer, 2006.

\bibitem{chapko98}
\textsc{R.~Chapko, R.~Kress, and J.-R. Yoon}.
\newblock On the numerical solution of an inverse boundary value problem for
  the heat equation.
\newblock \emph{Inverse Problems}, 14:853--867, 1998.

\bibitem{chapko99}
\textsc{R.~Chapko, R.~Kress, and J.-R. Yoon}.
\newblock An inverse boundary value problem for the heat equation:\ the
  {N}eumann condition.
\newblock \emph{Inverse Problems}, 15:1033--1046, 1999.

\bibitem{chkifa13}
\textsc{A.~Chkifa, A.~Cohen, R.~DeVore, and {\relax Ch}.~Schwab}.
\newblock Sparse adaptive {T}aylor approximation algorithms for parametric and
  stochastic elliptic {PDE}s.
\newblock \emph{ESAIM Mathematical Modelling and Numerical Analysis},
  47:253--280, 2013.

\bibitem{dahmen80}
\textsc{W.~Dahmen, R.~DeVore, and K.~Scherer}.
\newblock Multi-dimensional spline approximation.
\newblock \emph{SIAM Journal on Numerical Analysis}, 17:380--402, 1980.

\bibitem{dautray92}
\textsc{R.~Dautray and J.-L. Lions}.
\newblock \emph{Mathematical Analysis and Numerical Methods for Science and
  Technology, Vol.\ 5:\ Evolution Problems I}.
\newblock Springer-Verlag, 1992.

\bibitem{ernst10}
\textsc{O.~G. Ernst and E.~Ullmann}.
\newblock Stochastic {G}alerkin matrices.
\newblock \emph{SIAM Journal on Matrix Analysis and Applications},
  31:1848--1872, 2010.

\bibitem{frauenfelder05}
\textsc{P.~Frauenfelder, {\relax Ch}.~Schwab, and R.~A. Todor}.
\newblock Finite elements for elliptic problems with stochastic coefficients.
\newblock \emph{Computer Methods in Applied Mechanics and Engineering},
  194:205--228, 2005.

\bibitem{gaitan15}
\textsc{P.~Gaitan, H.~Isozaki, O.~Poisson, S.~Siltanen, and J.~P. Tamminen}.
\newblock Inverse problems for time-dependent singular heat conductivities:\
  multi-dimensional case.
\newblock \emph{Communications in Partial Differential Equations}, 40:837--877,
  2015.

\bibitem{gautschi04}
\textsc{W.~Gautschi}.
\newblock \emph{Orthogonal Polynomials:\ Computation and Approximation}.
\newblock Oxford University Press, 2004.

\bibitem{ghanem91}
\textsc{R.~G. Ghanem and P.~D. Spanos}.
\newblock \emph{Stochastic Finite Elements:\ A Spectral Approach}.
\newblock Springer-Verlag, 1991.

\bibitem{golub13}
\textsc{G.~H. Golub and C.~F. Van~Loan}.
\newblock \emph{Matrix Computations}.
\newblock Johns Hopkins University Press, 2013.

\bibitem{gratton07}
\textsc{S.~Gratton, A.~S. Lawless, and N.~K. Nichols}.
\newblock Approximate {G}auss--{N}ewton methods for nonlinear least squares
  problems.
\newblock \emph{SIAM Journal on Optimization}, 18:106--132, 2007.

\bibitem{hakula14}
\textsc{H.~Hakula, N.~Hyv\"{o}nen, and M.~Leinonen}.
\newblock Reconstruction algorithm based on stochastic {G}alerkin finite
  element method for electrical impedance tomography.
\newblock \emph{Inverse Problems}, 30:065006, 2014.

\bibitem{hanke97}
\textsc{M.~Hanke}.
\newblock Regularizing properties of a truncated {N}ewton-{CG} algorithm for
  nonlinear inverse problems.
\newblock \emph{Numerical Functional Analysis and Optimization}, 18:971--993,
  1997.

\bibitem{harbrecht13}
\textsc{H.~Harbrecht and J.~Tausch}.
\newblock On the numerical solution of a shape optimization problem for the
  heat equation.
\newblock \emph{SIAM Journal on Scientific Computing}, 35:A104--A121, 2013.

\bibitem{hoang13}
\textsc{V.~H. Hoang and {\relax Ch}.~Schwab}.
\newblock Sparse tensor {G}alerkin discretization of parametric and random
  parabolic {PDE}s---analytic regularity and generalized polynomial chaos
  approximation.
\newblock \emph{SIAM Journal on Mathematical Analysis}, 45:3050--3083, 2013.

\bibitem{hollig03}
\textsc{K.~H\"{o}llig}.
\newblock \emph{Finite Element Methods with B-Splines}.
\newblock Society for Industrial and Applied Mathematics (SIAM), 2003.

\bibitem{ikehata10}
\textsc{M.~Ikehata and M.~Kawashita}.
\newblock On the reconstruction of inclusions in a heat conductive body from
  dynamical boundary data over a finite time interval.
\newblock \emph{Inverse Problems}, 26:095004, 2010.

\bibitem{isakov06}
\textsc{V.~Isakov}.
\newblock \emph{Inverse Problems for Partial Differential Equations}.
\newblock Springer, 2006.

\bibitem{kolehmainen08}
\textsc{V.~Kolehmainen, J.~P. Kaipio, and H.~R.~B. Orlande}.
\newblock Reconstruction of thermal conductivity and heat capacity using a
  tomographic approach.
\newblock \emph{International Journal of Heat and Mass Transfer},
  51:1866--1876, 2008.

\bibitem{langer10}
\textsc{S.~Langer}.
\newblock Complexity analysis of the iteratively regularized {G}auss--{N}ewton
  method with inner {CG}-iteration.
\newblock \emph{Journal of Inverse and Ill-posed Problems}, 17:871--890, 2009.

\bibitem{langer07}
\textsc{S.~Langer and T.~Hohage}.
\newblock Convergence analysis of an inexact iteratively regularized
  {G}auss-{N}ewton method under general source conditions.
\newblock \emph{Journal of Inverse and Ill-posed Problems}, 15:311--327, 2007.

\bibitem{larsson03}
\textsc{S.~Larsson and V.~Thom\'{e}e}.
\newblock \emph{Partial Differential Equations with Numerical Methods}.
\newblock Springer, 2003.

\bibitem{lemaitre10}
\textsc{O.~P. Le~Ma\^{i}tre and O.~M. Knio}.
\newblock \emph{Spectral Methods for Uncertainty Quantification:\ With
  Applications to Computational Fluid Dynamics}.
\newblock Springer, 2010.

\bibitem{mustonen14}
\textsc{L.~Mustonen}.
\newblock \emph{Parametric differential equations and inverse diffusivity
  problem}.
\newblock Aalto University, 2014.
\newblock Master's thesis.

\bibitem{nobile09}
\textsc{F.~Nobile and R.~Tempone}.
\newblock Analysis and implementation issues for the numerical approximation of
  parabolic equations with random coefficients.
\newblock \emph{International Journal for Numerical Methods in Engineering},
  80:979--1006, 2009.

\bibitem{nobile08}
\textsc{F.~Nobile, R.~Tempone, and C.~G. Webster}.
\newblock A sparse grid stochastic collocation method for partial differential
  equations with random input data.
\newblock \emph{SIAM Journal on Numerical Analysis}, 46:2309--2345, 2008.

\bibitem{nocedal99}
\textsc{J.~Nocedal and S.~J. Wright}.
\newblock \emph{Numerical Optimization}.
\newblock Springer, 1999.

\bibitem{powell09}
\textsc{C.~E. Powell and H.~C. Elman}.
\newblock Block-diagonal preconditioning for spectral stochastic finite-element
  systems.
\newblock \emph{IMA Journal of Numerical Analysis}, 29:350--375, 2009.

\bibitem{saad00}
\textsc{Y.~Saad, M.~Yeung, J.~Erhel, and F.~Guyomarc'h}.
\newblock A deflated version of the conjugate gradient algorithm.
\newblock \emph{SIAM Journal on Scientific Computing}, 21:1909--1926, 2000.

\bibitem{schwab11}
\textsc{{\relax Ch}.~Schwab and C.~J. Gittelson}.
\newblock Sparse tensor discretizations of high-dimensional parametric and
  stochastic {PDE}s.
\newblock \emph{Acta Numerica}, 20:291--467, 2011.

\bibitem{soleimani05}
\textsc{M.~Soleimani and W.~R.~B. Lionheart}.
\newblock Nonlinear image reconstruction for electrical capacitance tomography
  using experimental data.
\newblock \emph{Measurement Science and Technology}, 16:1987--1996, 2005.

\bibitem{toivanen12}
\textsc{J.~M. Toivanen, V.~Kolehmainen, T.~Tarvainen, H.~R.~B. Orlande, and
  J.~P. Kaipio}.
\newblock Simultaneous estimation of spatially distributed thermal
  conductivity, heat capacity and surface heat transfer coefficient in thermal
  tomography.
\newblock \emph{International Journal of Heat and Mass Transfer},
  55:7958--7968, 2012.

\bibitem{toivanen14}
\textsc{J.~M. Toivanen, T.~Tarvainen, J.~M.~J. Huttunen, T.~Savolainen,
  H.~R.~B. Orlande, J.~P. Kaipio, and V.~Kolehmainen}.
\newblock {3D} thermal tomography with experimental measurement data.
\newblock \emph{International Journal of Heat and Mass Transfer},
  78:1126--1134, 2014.

\bibitem{ullmann10}
\textsc{E.~Ullmann}.
\newblock A {K}ronecker product preconditioner for stochastic {G}alerkin finite
  element discretizations.
\newblock \emph{SIAM Journal on Scientific Computing}, 32:923--946, 2010.

\bibitem{verwer96}
\textsc{J.~G. Verwer}.
\newblock Explicit {R}unge--{K}utta methods for parabolic partial differential
  equations.
\newblock \emph{Applied Numerical Mathematics}, 22:359--379, 1996.

\bibitem{xiu10}
\textsc{D.~Xiu}.
\newblock \emph{Numerical Methods for Stochastic Computations:\ A Spectral
  Method Approach}.
\newblock Princeton University Press, 2010.

\bibitem{xiu03}
\textsc{D.~Xiu and G.~E. Karniadakis}.
\newblock A new stochastic approach to transient heat conduction modeling with
  uncertainty.
\newblock \emph{International Journal of Heat and Mass Transfer},
  46:4681--4693, 2003.

\bibitem{xiu09}
\textsc{D.~Xiu and J.~Shen}.
\newblock Efficient stochastic {G}alerkin methods for random diffusion
  equations.
\newblock \emph{Journal of Computational Physics}, 228:266--281, 2009.

\bibitem{zhong96}
\textsc{X.~Zhong}.
\newblock Additive semi-implicit {R}unge--{K}utta methods for computing
  high-speed nonequilibrium reactive flows.
\newblock \emph{Journal of Computational Physics}, 128:19--31, 1996.

\end{thebibliography}

\end{document}